\documentclass[12pt,reqno]{amsart}

\usepackage[T1]{fontenc}
\usepackage[utf8]{inputenc} 
\usepackage{lmodern}        
\usepackage{microtype}

\usepackage{amsmath,amssymb,amsthm,amsfonts}
\usepackage{mathtools}

\usepackage[numbers,sort&compress]{natbib} 
\usepackage{hyperref}
\hypersetup{
  colorlinks=true,
  linkcolor=blue,
  citecolor=blue,
  urlcolor=blue
}
\usepackage[nameinlink,noabbrev]{cleveref}

\theoremstyle{plain}
\newtheorem{theorem}{Theorem}[section]

\newtheorem{lemma}[theorem]{Lemma}
\newtheorem{corollary}[theorem]{Corollary}
\newtheorem{quest}[theorem]{Question}
\newtheorem{conj}[theorem]{Conjecture}
\newtheorem{remark}[theorem]{Remark}
\newtheorem{definition}[theorem]{Definition}

\usepackage{enumitem}
\setlist{nosep}

\usepackage{graphicx}
\usepackage{tikz-cd}

\usepackage[margin=1.2in]{geometry} 

\newcommand{\R}{\mathbb{R}}

\newcommand{\Z}{\mathbb{Z}}

\newcommand{\RR}{\mathbb{R}}

\newcommand{\cM}{\mathcal{M}}
\newcommand{\cF}{\mathcal{F}}

\newcommand{\rot}{\text{rot}}

\newcommand{\Lag}{\text{LGr}}
\newcommand{\tb}{\text{tb}}

\newcommand{\la}{\lambda}
\newcommand{\om}{\omega}

\newcommand{\Span}{\mathrm{span}}

\numberwithin{equation}{section}

\title[A.I. Constructions in polyhedral symplectic topology]{Research Report: some constructions in Polyhedral Symplectic Topology suggested by A.I.}
\author{Antonio Alfieri and Connor Novak}
\address{University of Georgia. Athens, Georgia, USA.}
\email{alfieriantonio90@gmail.com}

\email{novakcon@uga.edu}

\begin{document}
 \begin{abstract}
We use large language models (LLM) to approach a question about Lagrangian smoothability  proposed  by Abouzaid et al. in "First Proof" \cite{abouzaid2026firstproof}. 
 \end{abstract}
 \maketitle

\section{Introduction}
This is our response to the experiment proposed in \cite{abouzaid2026firstproof}. 
 In particular we concentrated on Question 8, we report below.

\begin{quest}[Abouzaid et al. \cite{abouzaid2026firstproof}]
    A polyhedral Lagrangian surface K in $\R^4$ is a finite polyhedral complex all of whose faces are Lagrangians, and which is a topological submanifold of $\R^4$. A Lagrangian smoothing of $K$ is a Hamiltonian isotopy $K_t$ of smooth Lagrangian submanifolds, parameterised by $(0, 1]$, extending to a topological isotopy, parametrised by $[0, 1]$, with endpoint $K_0 = K$.

    \medskip
Let $K$ be a polyhedral Lagrangian surface with the property that exactly $4$ faces meet at every vertex. Does $K$ necessarily have a Lagrangian smoothing?
\end{quest}

\subsection{Methodology} We divided in two teams. Team A, the exploration team, used a standard commercial distribution of ChatGPT 5.2 to explore a solution for the proposed problem. Team B, the control team, used human intelligence and the Google Gemini Pro large language model to check correctness of the argument proposed. The two teams met for some research meetings where Team A had to defend the ChatGPT generated material, and Team B had to criticize it. None of the people involved had any specific expertise with polyhedral symplectic topology, and in particular were not familiar with Question 1.1.

\medskip
We started the experiment on Sunday, February 8th, and concluded on Friday, February 13th, a few hours before the solutions to the problem list in \cite{abouzaid2026firstproof} were disclosed. Due to the short notice we did not allocate any special time to this activity. Instead, we embedded the experiment in the regular workflow of our departmental life at University of Georgia. 

\medskip
Due to the time budget, there are many directions that have not been explored, and there are many little mathematical nuances that we did not have time to discuss. About $80\%$ of the text in this report has been A.I. generated.

\medskip 
\subsection*{Acknowledgment}We thank David Gay and Gordana Matic for having taken part to the experiment.
 
\section{Basic Set-Up}

\subsection{Polyhedral Lagrangian surfaces}

Let $(\mathbb{R}^4,\omega)$ denote $\R^4$ equipped with its standard (linear) symplectic form:
\[
\omega = dx_1 \wedge dy_1 + dx_2 \wedge dy_2 .
\]

A \emph{polyhedral surface} $K \subset \mathbb{R}^4$ is the image of a finite two--dimensional polyhedral complex whose $2$--cells are planar polygons embedded affinely in $\mathbb{R}^4$. We assume that $K$ is a \emph{topological surface}, that is, every point of $K$ admits a neighborhood homeomorphic to $\mathbb{R}^2$. Equivalently, the link of every vertex of $K$ is homeomorphic to a circle.

\begin{definition}[Lagrangian polyhedral surfaces]
A polyhedral surface $K \subset \mathbb{R}^4$ is called a \emph{polyhedral Lagrangian surface} if, for every $2$--dimensional face $F \subset K$, the affine plane $L_F:=\mathrm{Aff}(F) \subset \mathbb{R}^4$ containing $F$ is a Lagrangian plane, that is, $\omega\big|_{L_F} \equiv 0$.
\end{definition}

\subsection{Local primitive systems}\label{cocycle}
The first idea that was explored is the one of local primitive system, and that of its associated cocycle $c$. Initially ChatGPT mistakenly   recognized this as an obstruction to Hamiltonian smoothability. We report the idea anyway, since it was the center of the discussion in the first segment of the project.  

\medskip
Fix a polyhedral Lagrangian surface $K$ and write $\cF(K)$ for its set of faces. Since $\om|_F=0$ for each face $F$, $\la|_F$ is closed:
$d(\la|_F)=(d\la)|_F=\om|_F=0$.
Hence $\la|_F$ is (locally) exact on each face.

\begin{definition}[Local primitive system]
A \emph{local primitive system} on $K$ is a choice, for each face $F\in \cF(K)$, of a smooth function
\[
f_F:F\to \RR
\qquad\text{such that}\qquad
df_F=\la|_F.
\]
\end{definition}

Let $X_K$ denote the underlying $2$–dimensional polyhedral complex of $K$, and  $X_K^*$ the dual complex defined by reversing incidence.

\begin{definition}[Cocycle associated to a local primitive system]
Given a local primitive system $\{f_F\}$, define for adjacent faces $F,F'$, the constant
\[
c(F,F') := \bigl(f_{F'}-f_{F}\bigr)\big|_{F\cap F'}\in \RR.
\]
Then $c$ is a $1$--cochain on $X_K^*$ with values in $\RR$. We call $c$ the \emph{Hamiltonian cocycle} of the local primitive system.
\end{definition}

\begin{lemma}[Cocycle property]
The cochain $c$ is a $1$--cocycle: $c\in Z^1(X_K^*;\RR)$.
\end{lemma}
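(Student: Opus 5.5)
First I will check that $c$ is well defined as a $1$--cochain on $X_K^*$. After that, the cocycle condition reduces to a local statement at each vertex of $K$. The edges of $X_K^*$ correspond to edges $e=F\cap F'$ of $K$, and its $2$--cells correspond to vertices $v$ of $K$. We orient each dual edge from $F$ to $F'$, so that $c(F',F)=-c(F,F')$ by definition.

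\textbf{Constancy on edges.} On the segment $e=F\cap F'$ the two primitives satisfy $d(f_{F'}-f_F)|_e=\la|_e-\la|_e=0$, because both $df_F$ and $df_{F'}$ restrict on $e$ to the pullback of the same global $1$--form $\la$. Since $e$ is connected, $f_{F'}-f_F$ is constant on $e$. If $F$ and $F'$ could share more than one edge, we would index the cochain by edges rather than by pairs of faces. Each $f_F$ is defined on the whole (contractible, convex) face $F$: $\la|_F$ is closed, and $F$ is simply connected, so a global primitive exists. The same argument shows that $f_F$ is unique up to an additive constant. This uniqueness is what will make the cohomology class independent of the choices, although that is not needed here.

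\textbf{Cocycle condition.} For a vertex $v$ of $K$, the boundary of the dual $2$--cell $v^*$ is the cyclic sequence of faces $F_1,\dots,F_k$ around $v$. Here $F_i$ and $F_{i+1}$ share the edge $e_i\ni v$, with indices mod $k$, and this sequence exists because the link of $v$ is a circle. We must show
\[
\sum_{i=1}^k c(F_i,F_{i+1})=0 .
\]
Since $v\in e_i$ for every $i$, and $c(F_i,F_{i+1})$ is the constant value of $f_{F_{i+1}}-f_{F_i}$ on $e_i$, we may evaluate it at $v$. This gives $c(F_i,F_{i+1})=f_{F_{i+1}}(v)-f_{F_i}(v)$, and the sum telescopes to $f_{F_{k+1}}(v)-f_{F_1}(v)=0$. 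Hence $\delta c$ vanishes on every $2$--cell of $X_K^*$, so $c\in Z^1(X_K^*;\RR)$.

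\textbf{Main obstacle.} The only delicate point is bookkeeping: making sure the dual complex $X_K^*$ is set up so that its $2$--cells are exactly the vertex stars, with boundaries given by the cyclic face orders. This relies on $K$ being a topological surface, so that each vertex link is a single circle and each edge lies in exactly two faces. If the link were not a circle, the boundary of $v^*$ would not be a single cycle. Even then, the telescoping argument works on any closed cycle of faces through $v$. So beyond this combinatorial setup, I expect no real difficulty.
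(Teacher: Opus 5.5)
Your proposal is correct and follows the paper's proof: you evaluate each constant $c(F_i,F_{i+1})$ at the common vertex $v$ and telescope the sum around the cycle of faces bounding the dual $2$--cell. You also check that $f_{F'}-f_F$ is constant on the edge $F\cap F'$, and that each convex face carries a global primitive; the paper takes both points for granted when it defines $c$.
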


\begin{proof}
Fix any (combinatorial) $2$--cell in the dual picture; equivalently, fix any cyclic sequence of faces
\[
F_1,F_2,\dots,F_m,F_{m+1}=F_1
\]
such that consecutive faces meet along an edge of $K$ and the sequence forms the boundary of the dual polygon to a vertex. On each overlap $F_i\cap F_{i+1}$ we have
\[
c(F_i,F_{i+1}) = (f_{F_{i+1}}-f_{F_i})|_{F_i\cap F_{i+1}}.
\]
Summing around the cycle yields a telescoping sum:
\[
\sum_{i=1}^m c(F_i,F_{i+1})
= \sum_{i=1}^m \bigl(f_{F_{i+1}}-f_{F_i}\bigr)\Big|_{F_i\cap F_{i+1}}
= \sum_{i=1}^m f_{F_{i+1}}(v)-f_{F_i}(v) =0.
\]
This is precisely $\delta c=0$ on $2$--cells; hence $c$ is a cocycle.
\end{proof}

Replacing $f_F$ by $f_F+a_F$ for constants $a_F\in\RR$ changes $c$ by a coboundary:
\[
c(F,F')\longmapsto c(F,F') + (a_{F'}-a_F).
\]
Thus, the cohomology class $[c]\in H^1(X_K^*;\RR)$ does not depend on the specific choice of primitive system on the faces of $K$.

\subsection{A basic example: products of polygonal curves}When the chatbot was asked to produce some examples of polyhedral Lagrangians, it described the following polyhedral approximation of the Clifford torus.
Write
\[
\mathbb{R}^4 = \mathbb{R}^2_{(x_1,y_1)} \times \mathbb{R}^2_{(x_2,y_2)}
\]
with symplectic form
\[
\omega = dx_1 \wedge dy_1 + dx_2 \wedge dy_2 .
\]
Let $P \subset \mathbb{R}^2_{(x_1,y_1)}$ and $Q \subset \mathbb{R}^2_{(x_2,y_2)}$ be embedded polygonal curves. We define
\[
K := P \times Q \subset \mathbb{R}^2 \times \mathbb{R}^2 = \mathbb{R}^4 .
\]
The surface $K$ inherits a natural polyhedral structure whose $2$--faces are products
\[
e_P \times e_Q ,
\]
where $e_P$ is an edge of $P$ and $e_Q$ is an edge of $Q$. Each such face is a parallelogram embedded affinely in $\mathbb{R}^4$.

\begin{lemma}
The surface $K = P \times Q$ is a polyhedral Lagrangian surface.
\end{lemma}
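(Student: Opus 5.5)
We verify the two requirements separately: that $K = P \times Q$ is a topological surface with a polyhedral structure, and that every face spans a Lagrangian plane.

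\emph{Topological surface.} Since $P$ and $Q$ are embedded closed polygonal curves, each is homeomorphic to $S^1$. Hence $K = P\times Q$ is homeomorphic to $S^1\times S^1$, a closed topological surface. The product cell structure is a finite polyhedral complex. Its vertices are the pairs $(v_P, v_Q)$ of vertices of $P$ and $Q$. Its edges are the products $v_P\times e_Q$ and $e_P\times v_Q$. Its $2$--cells are the parallelograms $e_P\times e_Q$, each the affine image of a square. Around a vertex $(v_P,v_Q)$, exactly four faces meet: $e_P^{\pm}\times e_Q^{\pm}$, where $e_P^{\pm}$ are the two edges of $P$ at $v_P$ and $e_Q^{\pm}$ the two edges of $Q$ at $v_Q$. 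The link of the vertex is the join of two two-point sets, i.e.\ a $4$--cycle, which is a circle, consistent with the equivalent criterion in the definition.

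\emph{Lagrangian condition.} Fix a face $F=e_P\times e_Q$. Write $e_P$ as the segment from $p$ to $p+u$ with $u=(a,b)\in\RR^2_{(x_1,y_1)}$, and $e_Q$ as the segment from $q$ to $q+w$ with $w=(c,d)\in\RR^2_{(x_2,y_2)}$. Then
\[
\mathrm{Aff}(F) = (p,q) + \Span\{(a,b,0,0),\,(0,0,c,d)\}.
\]
This is a $2$--plane, since $u,w\neq 0$ and the two vectors lie in complementary factors. Because $\om$ is constant, it suffices to evaluate it on the two spanning vectors:
\[
\om\bigl((a,b,0,0),(0,0,c,d)\bigr) = (a\cdot 0 - b\cdot 0) + (0\cdot d - 0\cdot c) = 0 .
\]
More conceptually, $\om=\om_1\oplus\om_2$ splits along the product, and each factor vanishes on a line. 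Since $\om$ is alternating, its vanishing on one basis pair implies $\om|_{L_F}\equiv 0$, so $L_F$ is Lagrangian.

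There is no substantive obstacle. The only point needing care is confirming that $K$ is genuinely a topological submanifold with the claimed polyhedral structure, and this follows directly from the product structure together with $P$ and $Q$ being embedded.
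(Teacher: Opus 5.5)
Your proof is correct and follows the paper's argument: the paper also writes tangent vectors to $e_P\times e_Q$ as $(u_1,0)$ and $(0,v_2)$ and notes that $\omega = dx_1\wedge dy_1 + dx_2\wedge dy_2$ pairs them to zero, which is exactly your computation on $(a,b,0,0)$ and $(0,0,c,d)$. You also verify the topological-surface condition, which the paper's proof skips; that check is sound but uses that $P$ and $Q$ are closed, whereas the paper also allows open curves, where $K$ has boundary and only the face-wise Lagrangian condition is really being established.
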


\begin{proof}
Let $F = e_P \times e_Q$ be a $2$--face of $K$. Tangent vectors to $F$ split as
\[
u = (u_1,0), \qquad v = (0,v_2),
\]
with $u_1 \in T e_P \subset \mathbb{R}^2_{(x_1,y_1)}$ and $v_2 \in T e_Q \subset \mathbb{R}^2_{(x_2,y_2)}$. A direct computation gives
\[
\omega(u,v) = (dx_1 \wedge dy_1)(u_1,0) + (dx_2 \wedge dy_2)(0,v_2) = 0 
\]
showing that the faces are Lagrangians.
\end{proof}

If both $P$ and $Q$ are closed polygonal curves, then $K$ is a polyhedral Lagrangian torus. If one of the curves is open, $K$ is a polyhedral Lagrangian cylinder. More generally, this construction produces polyhedral Lagrangian surfaces of the form $S^1 \times I$ or $I \times I$.

\subsection{Lagrangian and Hamiltonian smoothings}

We begin by introducing a general notion of smoothing for polyhedral Lagrangian surfaces,
without imposing Hamiltonian rigidity.

\begin{definition}[Lagrangian smoothing]\label{def:lag_smoothing}
Let $K \subset \mathbb{R}^4$ be a polyhedral Lagrangian surface. A \emph{Lagrangian
smoothing} of $K$ is a family of subsets
\[
\{K_t\}_{t\in[0,1]}
\]
satisfying the following properties:
\begin{enumerate}
\item For every $t \in (0,1]$, the subset $K_t \subset \mathbb{R}^4$ is a smooth embedded
Lagrangian submanifold.
\item The family $\{K_t\}_{t\in(0,1]}$ is a smooth Lagrangian isotopy.
\item The family $\{K_t\}_{t\in[0,1]}$ extends to a topological isotopy, continuous in $t$,
with
\[
K_0 = K .
\]
\end{enumerate}
\end{definition}

\medskip

We now introduce the more restrictive notion relevant for this paper.

\begin{definition}[Hamiltonian Lagrangian smoothing]\label{def:ham_smoothing}
A \emph{Hamiltonian Lagrangian smoothing} of a polyhedral Lagrangian surface $K \subset
\mathbb{R}^4$ is a Lagrangian smoothing $\{K_t\}_{t\in[0,1]}$ such that the Lagrangian
isotopy $\{K_t\}_{t\in(0,1]}$ is Hamiltonian, i.e. there exists a Hamiltonian isotopy
$\{\phi_t\}_{t\in(0,1]}$ of $\mathbb{R}^4$ satisfying
$K_t = \phi_t(K)$ for all $ t \in (0,1]$.
\end{definition}


\section{Smoothing along the edges}
In this section we discuss the constructions suggested by ChatGPT when asked to work on Lagrangian smoothability of the edges of a Lagrangian polyhedron.

Let $K \subset \mathbb{R}^4$ be a polyhedral Lagrangian surface. Denote by $\mathrm{Vert}(K)$
its set of vertices. For $\varepsilon>0$ we set
\[
U_\varepsilon := \bigcup_{v\in \mathrm{Vert}(K)} B_\varepsilon(v),
\qquad
K^\varepsilon := K \setminus U_\varepsilon,
\]
where $B_\varepsilon(v)$ is the open Euclidean ball of radius $\varepsilon$ centered at $v$.
Since $K$ is a topological surface, for $\varepsilon>0$ sufficiently small the intersection
$K\cap B_\varepsilon(v)$ is a topological disk for every vertex $v$, and thus $K^\varepsilon$
is a compact surface with boundary whose boundary components are polygonal curves.

\subsection{Normal form for a hinge and exact smoothing}

We record a hinge normal form and the standard exact smoothing used repeatedly.

\begin{lemma}[Hinge normal form]\label{lem:hinge-normal-form}
Let $P,P'\subset(\RR^4,\om)$ be Lagrangian planes intersecting along a line $\ell=P\cap P'$. Then there exists a linear symplectomorphism of $\RR^4$ sending $\ell$ to $\Span\{e_1\}$ and sending $(P,P')$ to a pair of Lagrangian planes of the form
\[
P=\Span\{e_1,e_2\},\qquad
P'=\Span\{e_1,\ e_2+s f_2\}
\]
for some real parameter $s\in\RR$.
\end{lemma}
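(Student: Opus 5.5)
We will fix a symplectic basis convention $(e_1,e_2,f_1,f_2)$ of $\RR^4$ with $\om(e_i,f_j)=\delta_{ij}$ and $\om(e_i,e_j)=\om(f_i,f_j)=0$. We treat $P,P'$ as linear Lagrangian subspaces, so $\ell$ is a line through the origin; in the affine setting one first translates a point of $\ell$ to the origin. The plan is to build a symplectic basis adapted to the pair $(P,P')$. The linear symplectomorphism is then the map sending this basis to the standard one. A final normalization inside a $2$-dimensional symplectic factor produces the claimed form.

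\textbf{Step 1: both planes lie in $\ell^{\om}$.} Choose a nonzero $u\in\ell$ and set $e_1:=u$. Since $\ell\subset P$ and $P$ is Lagrangian, we have $P\subset \ell^{\om}$. The same holds for $P'$. Here $\ell^{\om}$ is $3$-dimensional and contains $\ell$, so $P+P'\subset\ell^{\om}$. If $P\neq P'$, then $P+P'$ has dimension $3$, so $P+P'=\ell^{\om}$.

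\textbf{Step 2: choose the basis.} Pick $e_2\in P$ with $P=\Span\{e_1,e_2\}$; then $\om(e_1,e_2)=0$ automatically. Extend $\{e_1,e_2\}$ to a symplectic basis $\{e_1,e_2,f_1,f_2\}$, which is always possible for an isotropic pair by the standard symplectic Gram--Schmidt. Then
\[
\ell^{\om}=\Span\{e_1,e_2,f_2\}.
\]
Pick $p'\in P'$ with $P'=\Span\{e_1,p'\}$ and write $p'=a e_1+b e_2+c f_2$. Subtracting $a e_1$, we may take $p'=b e_2+c f_2$. This vector is nonzero, and $c\neq0$ precisely when $P'\neq P$.

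\textbf{Step 3: normalize in the $(e_2,f_2)$-plane.} If $b\neq0$, rescale $p'$ to $e_2+s f_2$ with $s=c/b$, and we are done. The degenerate case $b=0$, i.e.\ $P'=\Span\{e_1,f_2\}$, is the one point needing care. It is handled by a symplectic map acting only on the symplectic plane $W=\Span\{e_2,f_2\}$ and as the identity on $\Span\{e_1,f_1\}$. Such a map preserves $e_1$ and hence $\ell$. In $W$, the planes $P$ and $P'$ correspond to the lines $\Span\{e_2\}$ and $\Span\{p'\}$. Since $\mathrm{Sp}(W)=SL_2(\RR)$ acts transitively on ordered pairs of distinct lines, we can send this pair to $(\Span\{e_2\},\Span\{e_2+f_2\})$. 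Concretely, the shear $e_2\mapsto e_2$, $f_2\mapsto f_2+e_2$ suffices. This gives $s=1$ in the degenerate case; in general the transitivity shows $s$ can even be normalized to $1$ whenever $P\neq P'$. If $P=P'$, the form holds with $s=0$.

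The main obstacle is thus only the bookkeeping in Step 3: making sure that the normalizing map is symplectic, fixes $\ell$, and keeps $P$ equal to $\Span\{e_1,e_2\}$. All of this holds because the map is block-diagonal with respect to the splitting $\Span\{e_1,f_1\}\oplus W$.
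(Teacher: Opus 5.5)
Your proof is correct and follows the same route as the paper. Both reduce to the symplectic plane $\ell^{\om}/\ell$, which you realize concretely as $\Span\{e_2,f_2\}$ inside $\ell^{\om}=\Span\{e_1,e_2,f_2\}$; there Lagrangian planes through $\ell$ become lines, and the normalization is a symplectic map of that plane. Your version is more explicit than the paper's, in particular in handling the case $P'=\Span\{e_1,f_2\}$ (which the paper absorbs into its choice of identification) and in observing that $s$ can in fact be normalized to $1$.
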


\begin{proof}
Fix $\ell$ and choose symplectic coordinates in which $\ell=\Span\{e_1\}$. The quotient
$\ell^\om/\ell$ is a symplectic $2$--plane. Lagrangian planes containing $\ell$ correspond to (Lagrangian) lines in $\ell^\om/\ell$, hence to points of $\RR P^1$. After choosing a symplectic identification of the quotient with $\Span\{e_2,f_2\}$, one may take $P$ to correspond to $\Span\{e_1,e_2\}$ and $P'$ to correspond to $\Span\{e_2+s f_2\}$. Lifting back gives the stated normal form.
\end{proof}

With this said, we show any polyhedral Lagrangian $K$ admits a Lagrangian smoothing away from the vertices.

\begin{lemma}[Edge smoothing away from vertices]\label{lem:edge_smoothing_concrete}
Let $K \subset \mathbb{R}^4$ be a polyhedral Lagrangian surface. For every $\varepsilon>0$ 
there exists a $1$--parameter family $\{K_t\}_{t\in[0,1]}$ such that $K_0=K$, for every $t\in(0,1]$, the subset $K_t \cap (\mathbb{R}^4 \setminus U_\varepsilon)$ is a
smooth embedded Lagrangian surface (with boundary), and $K_t$ is $\epsilon$-close to $K$.
\end{lemma}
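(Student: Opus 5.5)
We work edge by edge, using \Cref{lem:hinge-normal-form} to put each hinge into a model, smoothing the model by an explicit Lagrangian, and gluing the local pieces back to the flat faces.

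\textbf{Localization.} Fix $\varepsilon>0$. Away from $U_{\varepsilon/2}$ the singular locus of $K$ is a finite disjoint union of compact segments $e\setminus U_{\varepsilon/2}$. Each lies in the interior of an edge $e=F\cap F'$ where exactly two faces meet, since $K$ is a topological surface. Choose $\delta\ll\varepsilon$ so that the $\delta$-tubular neighborhoods $N_e$ of these segments are pairwise disjoint. Also require that $K\cap N_e$ is contained in $F\cup F'$. Outside $\bigcup_e N_e\cup U_{\varepsilon/2}$ the surface $K$ is already a smooth (flat) Lagrangian, and there we set $K_t=K$.

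\textbf{Local model.} By \Cref{lem:hinge-normal-form}, after a linear symplectomorphism and a translation, $\ell=\Span\{e_1\}$, $P=\Span\{e_1,e_2\}$ and $P'=\Span\{e_1,e_2+sf_2\}$. In coordinates $(x_1,y_1,x_2,y_2)$ the hinge $(F\cup F')\cap N_e$ is the product of a segment in the $x_1$-axis (with $y_1=0$) and a planar corner $\Gamma\subset\RR^2_{(x_2,y_2)}$. This corner is the union of the half-line $\{y_2=0,\ x_2\le 0\}$ and the half-line $\{y_2=sx_2,\ x_2\ge0\}$; if needed, orient so the two rays point out of the edge.

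If $s=\infty$ or the rays are collinear, interpret the corner accordingly. The case of rays lying in the same line, opposite each other, is already smooth. Overlapping rays cannot occur since $K$ is embedded.

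Replace $\Gamma$ by a smooth embedded planar curve $\Gamma_t$ that agrees with $\Gamma$ outside a disk of radius $t\delta/2$ about the origin and lies within $t\delta$ of $\Gamma$. Take $\Gamma_t$ to depend smoothly on $t\in(0,1]$ and converge to $\Gamma$ as $t\to0$, for instance by rounding the corner with a circular arc of radius proportional to $t$ and mollifying. Then $I\times\{0\}\times\Gamma_t$ is Lagrangian, since it is a product of curves in the two symplectic factors, exactly as in the product lemma for $P\times Q$.

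\textbf{Main obstacle: the ends of the edge segment.} The product model changes $K$ along the whole segment, but it must agree with $K$ near $\partial N_e$, including the ends of the edge inside $U_\varepsilon$. We handle this as follows. In the model the face $F'$ near $\ell$ is the graph over $P$ of $dh$ with $h=\tfrac s2x_2^2$ for $x_2\ge0$, and $h=0$ for $x_2\le0$. This is a $C^1$ function whose second derivative jumps, so the hinge is the graph of the closed $1$-form $dh$ over $P$. Write the smoothing as the graph of $dh_t$, where $h_t=\chi(x_1)\,(\rho_t * h)+(1-\chi(x_1))\,h$. Here $\rho_t$ is a mollifier in the $x_2$ variable and $\chi$ is a cutoff equal to $1$ on the part of the edge outside $U_\varepsilon$ and $0$ near $U_{\varepsilon/2}$.

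Since $h_t$ is a function, its graph is automatically Lagrangian. It is smooth wherever $\chi\equiv1$, agrees with $K$ outside a $t\delta$-neighborhood of $\ell$, and is $C^0$-close to $h$. The part of $K_t$ where $\chi$ is not identically $1$ lies inside $U_\varepsilon$, which is allowed by the statement. Embeddedness follows from being a graph inside $N_e$ and from the disjointness of the $N_e$.

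Finally we check the remaining conclusions. Because $h_t\to h$ uniformly in $C^1$ as $t\to0$, $K_t$ is $\varepsilon$-close to $K$, and $t\mapsto K_t$ is continuous at $t=0$. This gives the required family.
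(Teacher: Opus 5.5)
The step that fails is the one you identify as the main obstacle. An ordinary (even, nonnegative) mollifier does not leave $h$ unchanged away from $\ell$. If $\operatorname{supp}\rho_t\subset[-r,r]$, then for $x_2\ge r$
\[
(\rho_t*h)(x_2)=\tfrac{s}{2}\int\rho_t(u)\,(x_2-u)^2\,du=h(x_2)+c_t,\qquad c_t:=\tfrac{s}{2}\int u^2\rho_t(u)\,du\neq 0,
\]
while $\rho_t*h=h=0$ for $x_2\le -r$. Here $c_t$ is the signed area between the rounded profile $y_2=(\rho_t*h)'(x_2)$ and the corner. Where $\chi\equiv 1$ this does no harm, because only $dh_t$ matters. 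Over the transition zone, however, $h_t-h=\chi(x_1)\,c_t$ on the whole $F'$-side $\{x_2\ge r\}$, so there
\[
dh_t=dh+c_t\,\chi'(x_1)\,dx_1 .
\]
So over $\operatorname{supp}\chi'$ your graph is the face $F'$ shifted by $c_t\chi'(x_1)$ in the $y_1$-direction, all the way out to $\partial N_e$, where you have set $K_t=K$. The claim that the graph ``agrees with $K$ outside a $t\delta$-neighborhood of $\ell$'' is false there. Consequently $K_t$ is torn along $\partial N_e$, and the cutoff does not do its job.

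This is forced, not an artifact of your formula. If a graph $\{y=dh_t\}$ agrees with $K$ on a connected collar of the boundary of the domain, then $h_t-h$ is constant on that collar. Hence on every slice $x_1=a$ the profile must cut zero signed area from the corner. Exactness converts any $x_1$-variation of that area into a transverse $y_1$-displacement, so you cannot interpolate between a standard rounding and the sharp corner.

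The repair is to use zero-area roundings. For example, take $\tilde h_t:=\rho_t*h-c_t\,\theta(x_2/r)$, where $\theta$ is a smooth step from $0$ on $(-\infty,-1]$ to $1$ on $[1,\infty)$; alternatively, use an even kernel with vanishing second moment, which is necessarily sign-changing. Then $\tilde h_t-h$ is supported in $\{|x_2|\le r\}$, and $h_t:=h+\chi(x_1)(\tilde h_t-h)$ equals $h$ near the boundary of the domain. The rest of your argument then goes through, provided you leave a margin so that the non-smooth transition zone lies inside $U_\varepsilon$ (say $\chi\equiv1$ on edge points outside $U_{3\varepsilon/4}$, with $\delta$ small).

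A smaller point: \Cref{lem:hinge-normal-form} normalizes the planes, not the half-planes. To write the hinge as a graph over $P$ you also need the two rays of $\Gamma$ to lie over opposite halves of the $x_2$-axis. ``Orienting the rays out of the edge'' does not give this. An element of $SL(2,\RR)$ acting on the $(x_2,y_2)$-factor, sending the ray directions $u,v$ to $(-1,0)$ and $(1,-\det(u,v))$, does; it also makes the case $s=\infty$ unnecessary.

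Once repaired, your argument is more complete than the paper's. The paper works only in the $x_1$-independent product $\{y_1=0\}\times\{g_t=0\}$ and never addresses how the smoothing is stopped near the vertices.
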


The first argument presented by ChatGPT at this point looked correct at first sight, the human group checking the argument rated it as  nonsense though, and proposed the following argument. In our experience, errors of this kind could have been eventually fixed by the chatbot. We chose to write the argument ourselves in the interest of time.

\begin{proof}

In the normal form coordinates $(x_1,x_2,y_1,y_2)$, the planes are
\[
P=\{y_1=y_2=0\},\qquad
P'=\{y_1=0,\ y_2=s x_2\}.
\]

First, let $\beta_\varepsilon(x_2,y_2)$ be a bump function: $\beta_\varepsilon$ is a smooth function such that $\beta_\varepsilon\equiv 1$ in $B_{\varepsilon/2}(0)$, and the support of $\beta_\varepsilon$ is contained in $B_{\varepsilon}(0)$. 
Consider the time-dependent function of $x_2$ and $y_2$: $$g_t(x_2,y_2)=y_2(y_2-sx_2)-t\beta_\varepsilon(x_2,y_2).$$
At $t=0$, the zero set of $g_1(x_2,y_2)=y_2(y_2-sx_2)$ is the union of the lines $y_2=0$ and $y_2=sx_2$, so $P\cup P'=\{y_1=0, g_1(x_2,y_2)=0\}$.
For $t>0$, outside of $B_{\varepsilon}(0)$, the zero set of $g_t$ coincides with that of $y_2(y_2-sx_2)$. 
Inside $B_{\varepsilon/2}(0)$, the zero set of $g_t$ is given by $x_2=s\left(y_2-\frac{t}{y_2}\right)$, which is a smooth planar curve. 
Between $B_{\varepsilon/2}(0)$ and $B_{\varepsilon}(0)$, our bump function smoothly interpolates between the two zero sets.
Define $$S_t=\{y_1=0, g_t(x_2,y_2)=0\}.$$ 
As mentioned above, $S_0=P\cup P'$. 
$S_t$ is a smoothly embedded Lagrangian, because it is realized as the Lagrangian curve, $g_t(x_2,y_2)=0$, in the $(x_2,y_2)$-symplectic plane times the Lagrangian curve, $y_1=0$, in the $(x_1,y_1)$-symplectic plane.
\end{proof}

In summary, smoothing along edges is a purely local problem and can always be carried out
away from arbitrarily small neighborhoods of the vertices. Consequently, any obstruction
to the existence of a  Lagrangian smoothing of a polyhedral Lagrangian surface
seems to be localized at the vertices.

\section{Local Obstructions at the vertices}
The core of the discussion in the second segment of our experiment concentrated on understanding Lagrangian smoothability at the vertices. The chatbot here proposed two obstructions.

\subsection{Cocycle obstruction (failed idea)}
The first obstruction that was proposed was based on the following argument. Let $K$ be a polyhedral Lagrangian. Given a vertex $v$, consider a small loop $\gamma$ around $v$, and consider the evaluation $[c]\cdot [\gamma]\in \Z$ of the cocycle $c$ we introduced in Section \ref{cocycle}. If $K$ is smoothable through some Hamiltonian family  
\[f_\epsilon: K\to \R^4\]
of embedded Lagrangians, then
\[\lim_{\epsilon\to1} \int_{\gamma_\epsilon} \lambda =  [c]\cdot [\gamma_v]\]
where $\gamma_\epsilon:=f_\epsilon(\gamma)$. On the other hand, by stokes theorem 
\[\int_{\gamma_\epsilon} \lambda =\int_{int(\gamma_\epsilon)} d\omega=0 , \]
 showing that $[c]\cdot [\gamma]=0$ at each smoothable vertex. This would be an appealing idea if it were not that $[c]\cdot [\gamma]=0$ simply because $\gamma$ bounds a 2-cell in $X^*_K$.    ChatGPT insisted on this bad idea repeatedly; we assessed that the source of the mistake was its belief that $c$ had to be a class in the homology of the dual graph of  $K$. 

\subsection{Maslov index obstruction}
During the discussion regarding the Hamiltonian cocycle, ChatGPT tangentially mentioned that there was also a Maslov index obstruction that had to vanish for the Lagrangian to be smoothable. When asked for more details, it pointed out that $t \mapsto T_{\gamma_\epsilon(t)}L_\epsilon$ describes a homotopically trivial loop in the  Grassmanian, $\Lag(2)$, of Lagrangian planes in $(\R^4,\omega)$. Hence in a smoothable polyhedral Lagrangian, when going around a vertex, one must have Maslov index zero.

\medskip
We then asked the system to exploit this observation to produce some obstruction of combinatorial nature. Our query generated the following construction.

\medskip
Let $K\subset \R^{4}$ be a polyhedral Lagrangian surface. Near a vertex $v$ of valency $k$, it is a union of finitely many planar faces $F_1, \dots, F_k$. The tangent planes $T_vF_i\subset \R^4$ gives rise to a cyclic sequence of Lagrangian planes  \(L_1,\dots,L_k\subset (\mathbb{R}^4,\omega)\) with the following intersection pattern:
\begin{align*}
&\dim(L_i\cap L_{i+1})=1 \quad\text{for } i\in\mathbb{Z}/k\Z,\\
&L_i\cap L_{i+2}=\{0\}\quad\text{for } i\in\mathbb{Z}/k\Z,\\
&L_1\cap L_2\cap \dots \cap L_k=\{0\}.
\end{align*}
Denote by $\ell_{i, i+1}=L_i\cap L_{i+1}$ the (isotropic) line of intersection between two consecutive Lagrangians. To obtain a loop in $\Lag(2)$ we construct paths
\[
\alpha_{i,i+1}:[0,1]\to \Lag(2),
\qquad
\alpha_{i,i+1}(0)=L_i,\ \alpha_{i,i+1}(1)=L_{i+1},
\]
transitioning from one Lagrangian plane to the next. To construct the path, we can consider the symplectic reduction $W_i=\ell_{i,i+1}^\omega/\ell_{i,i+1}$. This is a symplectic plane where the Lagrangian planes $L_i, L_{i+1}\subset \R^4$ descend to two oriented lines through the origin. We  define $\alpha_{i,i+1}$ as the family of Lagrangian planes corresponding to a counterclockwise rotation of $L_i$ over $L_{i+1}$ in the symplectic plane $W$.\footnote{The chatbot also proposed to connect $L_i$ to $L_{i+1}$ by a shortest path argument, using the symmetric space structure of $\Lag(2)\cong U(2)/O(2)$.}

\medskip
Concatenating the connecting
paths $\alpha_{i,i+1}$ yields a continuous loop
\[
\alpha_v:S^1\to \Lag(2).
\]
The combinatorial Maslov index at the vertex $v$ is the Maslov index of $\alpha_v$:
\[
\mu(K;v)\ :=\ \mu(\alpha_v)\in\Z.
\]
Alternatively, one can smooth $K$ along the edges and get a Lagrangian surface $\Sigma_\epsilon$ that is $\epsilon$-close to $K$ and has as boundary some small circles $\gamma_v$ around the vertex $v$. One can then define $\mu(K;v):=\mu(\beta_v)\in \Z$ as the Maslov index of the loop $\beta_v:\theta\in \gamma_v\mapsto T_\theta\Sigma_\epsilon$. 

\subsection{Topological Obstructions}
Finally, there are some topological obstructions. At a vertex $v$ one can look at the intersection
\[\Gamma_v=S_\delta^3(v)\cap K\]
of a small sphere with the polyhedral Lagrangian. This is a knot in $S^3$. For the Lagrangian $K$ to be smoothable at $v$, this knot must be \emph{slice}\cite{fox1962quick}, that is it must be the boundary of a smooth disk $D\subset B^4_\delta(v)$. Knots are obstructed from having this property by many different knot invariants (e.g. the Arf invariant). For example, the trefoil knot is not slice (having $\text{Arf}=1$). 
Furthermore, any knot in $S^3$ can be approximated by a polygonal knot, whose cone is a polyhedral disk with boundary $K$, so the existence of the polyhedral disk does not buy anything here.

\medskip
This is not a real obstruction for the question that was proposed. For a knot $Q\subset S^3$, one defines the \emph{stick number} $\text{Stk}(Q)$ to be the minimum number of edges needed to describe $Q$ with a polygonal curve. For the knot $\Gamma_v$ at a vertex of a polyhedral surface $K$, one has that $\text{Stk}(\Gamma_v)\leq k$ where $k$ denotes the valency of $v$. Since $6$ is the lowest stick number for any nontrivial knot, assuming the valency at each vertex to be $k\le5$ kills this obstruction.

\medskip
Of course there is more to this situation than simple topology: the knot $\Gamma_v\subset S^3$ specified by the vertex of a polyhedral surface is a piecewise Legendrian knot, it has $k$ corner points, and the Legendrian arc between two consecutive corner points is a portion of a great circle in $S^3$. We call a knot of this kind a \textit{PL Legendrian knot}. We believe the following question that arose in this discussion may be interesting to explore in the future.

\begin{quest}
    Which PL Legendrian knot $Q\subset (S^3, \xi_{\text{st}})$ bounds a polyhedral Lagrangian disk $\Delta \subset (B^4, \omega_0)$?
\end{quest}

Note that for a \emph{smooth} Legendrian knot $Q$ to bound a Lagrangian disk $\Delta \subset (B^4, \omega_0)$, one must have that $Q$ is the unknot with rotation number $\rot=0$, and Thurston-Bennequin number $\tb=-1$.

\section{Local models at the vertices}
In this section we discuss a normal form one can achieve at the vertices up to symplectic linear change of coordinates. 
Artificial Intelligence software seems to  perform tasks of this kind very efficiently, almost always without any significant error. 

\subsection{Normal form for the planes}
First we recover a normal form at tangential level.
Assume $(L_1,L_2,L_3,L_4)$ is a cyclically oriented sequence of linear Lagrangian subspaces with the square intersection pattern:
consecutive intersections are lines, opposite intersections are trivial.

Up to a linear symplectomorphism of $(\R^4,\omega_0)$, we may put three of the planes into standard position:
\[
L_1=\Span\{e_1,e_2\},\qquad
L_2=\Span\{e_1,f_2\},\qquad
L_3=\Span\{f_1,f_2\}.
\]
Now $L_4$ must satisfy:
\begin{itemize}
\item $L_4\cap L_1$ is a line in $L_1$ transverse to $L_1\cap L_2=\Span\{e_1\}$,
so it is spanned by some $e_2+\tau e_1$ with $\tau\in\R$.
\item $L_4\cap L_3$ is a line in $L_3$ transverse to $L_2\cap L_3=\Span\{f_2\}$,
so it is spanned by some $f_1+\sigma f_2$ with $\sigma\in\R$.
\item The plane $L_4=\Span\{e_2+\tau e_1,\ f_1+\sigma f_2\}$ is Lagrangian if and only if
\[
0=\omega_0(e_2+\tau e_1,\ f_1+\sigma f_2)=\tau+\sigma,
\]
hence $\sigma=-\tau$.
\end{itemize}
Therefore
\[
L_4=L_4(\tau):=\Span\{\,e_2+\tau e_1,\ f_1-\tau f_2\,\}.
\]
The condition $L_2\cap L_4=\{0\}$ forces $\tau\neq 0$. This recovers a one-parameter family $L(\tau)$, modeling the tangent cone at a vertex of a polyhedral Lagrangian.


\subsection{From planes to faces: sign parameters \texorpdfstring{$\varepsilon_{ij}$}{epsij}}

The polyhedral vertex is determined, at the level of germs, by specifying in each intersection line
$\ell_{i,i+1}$ \emph{which half-ray} is the edge direction. We encode this by signs
\[
\varepsilon_{12},\varepsilon_{23},\varepsilon_{34},\varepsilon_{41}\in\{\pm 1\},
\]
and define the edge direction vectors
\[
a_{12}=\varepsilon_{12}e_1,\qquad
a_{23}=\varepsilon_{23}f_2,\qquad
a_{34}=\varepsilon_{34}(f_1-\tau f_2),\qquad
a_{41}=\varepsilon_{41}(e_2+\tau e_1).
\]
Then each face is the (closed convex) cone spanned by its two adjacent edge rays:
\[
F_i=\{\, s\,a_{i-1,i}+t\,a_{i,i+1}\mid s\ge0,\ t\ge0\,\}.
\]
Writing these as equations and inequalities yields an explicit local model
\[
K(\tau;\varepsilon_{12},\varepsilon_{23},\varepsilon_{34},\varepsilon_{41})
:=F_1\cup F_2\cup F_3\cup F_4\subset\R^4,
\]
where:
\begin{align*}
F_1&=\{\,y_1=y_2=0,\ \varepsilon_{41}x_2\ge0,\ \varepsilon_{12}(x_1-\tau x_2)\ge0\,\},\\
F_2&=\{\,y_1=0,\ x_2=0,\ \varepsilon_{12}x_1\ge0,\ \varepsilon_{23}y_2\ge0\,\},\\
F_3&=\{\,x_1=x_2=0,\ \varepsilon_{34}y_1\ge0,\ \varepsilon_{23}(y_2+\tau y_1)\ge0\,\},\\
F_4&=\{\,x_1=\tau x_2,\ y_2=-\tau y_1,\ \varepsilon_{41}x_2\ge0,\ \varepsilon_{34}y_1\ge0\,\}.
\end{align*}
By construction, $F_i\subset L_i$ and each $L_i$ is Lagrangian, so every face is Lagrangian.

\subsection{The sign \texorpdfstring{$\sigma$}{sigma} and the two vertex types}

Define the product sign
\[
\sigma:=\varepsilon_{12}\varepsilon_{23}\varepsilon_{34}\varepsilon_{41}\in\{\pm1\}.
\]
This is the only discrete datum that survives the natural symmetries (global inversion,
cyclic relabeling, and the $\tau\mapsto -\tau$ symmetry). Intuitively, $\sigma$ measures whether,
when you go once around the vertex, the chosen edge half-rays are ``coherently oriented''
around the cycle or introduce a ``twist''.

\subsection{An heuristic explanation?} The way we entered the portion of the conversation reported in this section was quite interesting. 

\medskip
In various points of the discussion regarding the Maslov index and the (failed) Hamiltonian cocycle obstruction, the chatbot stated some out of context sentences about a certain moduli space having dimension one. We first thought it was a hallucination, but when we later asked ChatGPT what was special about the hypothesis of valency $k=4$, it insisted on this point. We report the text produced below.

\medskip
\begin{center}
\itshape
\begin{minipage}{0.8\textwidth}
``Fix a vertex $v$ of $K$ at which exactly $4$ faces meet, in cyclic order:
\[
F_1,F_2,F_3,F_4.
\]
Let $L_i$ denote the affine Lagrangian plane containing $F_i$. The topological surface condition implies the \emph{link} of $v$ in $K$ is a $4$--cycle; in particular, consecutive faces meet along an edge and nonconsecutive faces meet only at the vertex.

At the linearized level (tangent cones), one obtains a cyclic configuration of $4$ Lagrangian planes through the origin with the same intersection pattern. Such configurations admit a $1$--parameter moduli, conveniently encoded by a projective cross--ratio; abstractly this is the $1$--dimensional moduli space
\[
\cM_4 \;=\; (\RR P^1)^4 / PGL(2,\RR),
\qquad \dim \cM_4 = 1.
\]
For our purposes it is enough to fix a concrete normal form carrying an explicit parameter. Different conventions for extracting a literal cross--ratio amount to a M\"obius reparametrization of this parameter.
''
\end{minipage}
\end{center}

\medskip
When we arrived at this point we were almost at the end of our experiment, and we tried closing the argument.

\section{Closing the argument}

Let $K$ be an orientable polyhedral Lagrangian in $(\R^4, \omega)$ with vertices of valency $k=4$. Choose $\delta>0$ arbitrary small and $0<\epsilon < \delta$. Following, Lemma \ref{lem:edge_smoothing_concrete} we can smooth the edges of $K$ in an $\varepsilon$-neighborhood of $K$. This gives rise to a smooth Lagrangian $\Sigma_\varepsilon$ that is  $\varepsilon$-close to $K$. 

Given a vertex $v$ of $K$ we can consider the intersection 
\[\Gamma_{v,\delta}:= \Sigma_{\varepsilon} \cap S^3_\delta(v) \ .\]
Since $k\leq 4$ this is an unknot. If we can verify that $\Gamma_{v,\delta}$ has rotation number $\rot=0$ and Thurston-Bennequin number $\tb=-1$ we can cap-off $\Gamma_{v,\delta}$ with a Lagranian disk, and produce a smooth Lagrangian $K_\epsilon$ that is $\varepsilon$-close to $K$.  

Using results of Chartraine \cite[Theorem 1.3, Section 2.2]{chantraine2010lagrangian}\footnote{The citation was not made explicit by ChatGPT but we could pin down the exact paper during one of the meetings between Team A and Team B. Even though the chatbot mentioned Chartraine's work at some point, we believe the extensive use of LLM methods can expose researchers to  unintended plagiarism.} one can identify the rotation number of $\Gamma_{v,\delta}$ with the Maslov index:
\begin{equation}\label{maslov}
    \rot(\Gamma_{v,\delta})= \frac{1}{2}\mu(K,v)
\end{equation} 
We say that $K$ is \emph{unobstructed} if the Maslov index $\mu(K,v)=0$ vanishes at each vertex. 

\medskip
Assuming $K$ is unobstructed at each vertex, the only thing left to check is that the Thurston-Bennequin number is $-1$, a potential second obstruction. At this point  we asked ChatGPT to compute the Maslov index from the local model. The answer was: 

\begin{conj}[ChatGPT 5.2]
Let $\Gamma_{v,\delta} \subset (S^3,\xi_{\mathrm{std}})$ be the Legendrian link obtained by intersecting a locally smoothed valence--$4$ vertex with a sufficiently small sphere. Then the following hold:

\smallskip

\noindent
\textup{(i)} If the polyhedral Maslov index satisfies 
\[
\mu(K,v)=0,
\]
then 
\[
\mathrm{rot}(\Gamma_v^\varepsilon)=0
\quad\text{and}\quad
tb(\Gamma_v^\varepsilon)=-1.
\]
In particular, $\Gamma_v^\varepsilon$ is Legendrian isotopic to the standard Legendrian unknot.

\smallskip

\noindent
\textup{(ii)} If the polyhedral Maslov index satisfies 
\[
\mu(K,v)\not=0,
\]
then 
\[
\mathrm{rot}(\Gamma_v^\varepsilon)\neq 0,
\]
and therefore $\Gamma_v^\varepsilon$ cannot bound a Lagrangian disk inside $B_\varepsilon(v)$. In particular, the vertex is not locally Lagrangian smoothable.
\end{conj}

Due to our time budget we could not work to verify this calculation. We just point out that one can ask the chatbot to produce a parametrization of the knot $K\cap S^3_\delta(v)$. From there one should be able to get a better understanding of the situation and even produce an image of the front projection through a 3-d graphing calculator (Wolfram alpha, Geogebra, etc.).\footnote{We noticed ChatGPT has serious limitations when it comes to producing images relevant to the conversation. This is maybe due to the fact that we did not have access to the full model. For topology in particular, this is a major hurdle.}

\subsection{Breaking the barrier between science and science fiction} There are various delicate points that seemed impossible to treat with a Large Language Model. In particular to turn the heuristics we just discussed into a complete proof, one needs to address the following points:
\begin{enumerate}
    \item One has to carefully verify that $\Sigma_\varepsilon$ can be arranged to intersect the sphere $S^3_\delta(v)$ in a Legendrian knot.
    \item The identification between the two definitions of Maslov index (combinatorial and topological) has to be made precise.
    \item The smoothing of the PL Legendrian knot $S^3_\delta(v)\cap K$ has to be carefully discussed in relation to the Lagrangian hinge smoothing, and a careful computation of the Thurston-Bennequin number has to be performed.
\end{enumerate}
\noindent 
Filling these gaps marks the line between rigorous mathematics and science fiction.

\subsection{The Mat\' ic Experiment}
Professor Gordana Mat\' ic, who sided with both teams over the course of the project, simultaneously ran a separate experiment. She started two different conversations with Google Gemini Pro, one whom she asked to prove that the question had affirmative answer, and the other whom she asked to justify the opposite. It appeared in both instances that the chatbot was convinced it could provide a valid proof in the direction that it was instructed. Its primary goal seemed to be to validate the claim of the user, rather than proving the correct statement. 

 \bibliographystyle{plainnat} 
 \bibliography{refs}

 \end{document}